\documentclass[11pt]{amsproc}

\usepackage[hiresbb]{graphicx, xcolor}
\usepackage{wrapfig}
\usepackage{bm}

\usepackage{amscd, amsmath, amsthm, amssymb, mathdots, mathtools, mathrsfs, stmaryrd}
\usepackage{ascmac}
\usepackage{comment}
\usepackage{bm, bbm}
\allowdisplaybreaks

\usepackage{CJKutf8}

\usepackage{fullpage}

\usepackage{hyperref}

\usepackage{tikz}
\usetikzlibrary{intersections, calc, arrows.meta}

\usepackage{here}
\usepackage{time}
\usepackage[abbrev]{amsrefs}

\usepackage{xcolor}
\usepackage[capitalize,nameinlink,noabbrev,nosort]{cleveref}
\hypersetup{
	colorlinks=true,       
	linkcolor=brown,          
	citecolor=brown,        
	filecolor=brown,      
	urlcolor=brown,           
}

\makeatletter
\@namedef{subjclassname@2020}{\textup{2020} Mathematics Subject Classification}
\makeatother

\newtheorem*{theorem*}{\hspace{-6.3mm}\textbf{Theorem}}  

\newtheorem{theoremcounter}{Theorem Counter}[section]

\theoremstyle{remark}

\theoremstyle{definition}
\newtheorem{definition}[theoremcounter]{Definition}

\theoremstyle{plain}
\newtheorem{lemma}[theoremcounter]{Lemma}
\newtheorem{proposition}[theoremcounter]{Proposition}

\newtheorem{theorem}[theoremcounter]{Theorem}

\numberwithin{equation}{section}

\newcommand{\bbC}{\mathbb{C}}

\newcommand{\dd}{\mathrm{d}}

\DeclareMathOperator{\ReNew}{Re}
\renewcommand{\Re}{\ReNew}

\begin{document}

\title[]{Poly-Bernoulli numbers from shifted log-sine integrals}

\author[]{Toshiki Matsusaka}
\address{Faculty of Mathematics, Kyushu University, Motooka 744, Nishi-ku, Fukuoka 819-0395, Japan}
\email{matsusaka@math.kyushu-u.ac.jp}


\subjclass[2020]{}



\maketitle

\begin{abstract}
	In 1999, Arakawa and Kaneko introduced a zeta function whose special values at negative integers yield the poly-Bernoulli numbers and investigated its relation to multiple zeta values. Since the poly-Bernoulli numbers appear in this function essentially by design, it is natural to ask whether they arise as special values of more intrinsic zeta-type objects. In this article, we show that a shifted log-sine integral provides such an example. Its analytically continued values at negative integers are given by anti-diagonal sums of poly-Bernoulli numbers with negative index.
\end{abstract}


\section{Introduction}

The \emph{poly-Bernoulli numbers}, introduced by Kaneko~\cite{Kaneko1997}, generalize the classical Bernoulli numbers. They are defined by the generating function
\[
	\sum_{n=0}^\infty B_n^{(k)} \frac{x^n}{n!} = \frac{\mathrm{Li}_k(1-e^{-x})}{1-e^{-x}},
\]
where $\mathrm{Li}_k(z) = \sum_{m=1}^\infty z^m/m^k$ denotes the polylogarithm function. Motivated by Kaneko's work, Arakawa and Kaneko~\cite{ArakawaKaneko1999} introduced a zeta function whose values at positive integers are closely related to multiple zeta values, while its values at negative integers are connected with the poly-Bernoulli numbers. In this respect, the situation resembles Euler's classical results for the Riemann zeta function.

However, Arakawa--Kaneko's zeta function is defined so that the poly-Bernoulli numbers appear as its special values. This raises the question whether there exist more intrinsic zeta-type objects whose special values at negative integers naturally produce the poly-Bernoulli numbers. Evidence in this direction was given by Kaneko himself~\cite{Kaneko2008}, who discussed an observation of Stephan (proved in~\cite{BenyiMatsusaka2023}) concerning the central binomial series. The observation predicts that the ``rational part" of its values at negative integers can be expressed in terms of poly-Bernoulli numbers with negative index. The aim of this article is to show that a \emph{shifted log-sine integral} provides such an example, refining Stephan's observation.

\begin{definition}
	For $0 < \sigma \le \pi$ and $s \in \bbC$ with $\Re(s) > 1$, we define the \emph{shifted log-sine integral} by
	\[
		\mathrm{SLs}(s; \sigma) \coloneqq \frac{1}{\Gamma(s-1)} \int_0^\sigma (\theta - \sigma) \left(- 2\log \frac{\sin(\theta/2)}{\sin(\sigma/2)}\right)^{s-2} \dd \theta.
	\]
\end{definition}

The relationship between log-sine integrals and zeta values has a long history. Among the earliest results are Euler's evaluation~\cite{Euler1773}
\[
	\left(1 - \frac{1}{2^3}\right) \zeta(3) = \frac{\pi^2}{4} \log 2 + 2 \int_0^{\pi/2} \theta \log(\sin \theta) \dd \theta,
\]
and Clausen's identity~\cite{Clausen1832}
\[
	\sum_{n=1}^\infty \frac{\sin(nx)}{n^2} = -x \log 2 - \int_0^x \log \left(\sin \frac{\theta}{2}\right) \dd \theta.
\]
More modern studies include the work of Borwein--Broadhurst--Kamnitzer~\cite{BBK2001}. It is also worth emphasizing that the Japanese mathematician Zuiman Y\^{u}j\^{o}b\^{o}~\cite{Yujobo1946} obtained the following identities already in 1946:
\begin{align*}
	\zeta(2n+1) &= \frac{(-1)^n (2\pi)^{2n-1}}{(2n)!} \int_0^{2\pi} B_{2n} \left(\frac{\theta}{2\pi}\right) \log \left(2 \sin \frac{\theta}{2}\right) \dd \theta,\\
	\zeta(1,2n) &\coloneqq \sum_{0 < m_1 < m_2} \frac{1}{m_1 m_2^{2n}}\\
		&= \frac{(-1)^n (2\pi)^{2n-1}}{2(2n-1)!} \int_0^{2\pi} B_1\left(\frac{\theta}{2\pi}\right) B_{2n-1} \left(\frac{\theta}{2\pi}\right) \log \left(2 \sin \frac{\theta}{2} \right) \dd \theta,
\end{align*}
where $B_n(x)$ is the Bernoulli polynomial.

For integers $s \ge 2$, the values of $\mathrm{SLs}(s; \sigma)$ can be described in terms of the central binomial series (and its Hurwitz-type generalization), as will be shown in \cref{thm:SLs-AC}. Furthermore, Umezawa~\cite[Theorem 1.4]{Umezawa2023} proved that a more general iterated log-sine integral can be expressed explicitly in terms of multiple zeta values and multiple polylogarithms. Thus, the values at positive integers are well understood. 
For example, when $s = 2$ and $s=3$, Umezawa's result implies that
\begin{align*}
	\mathrm{SLs}(2; \sigma) &= -\frac{\sigma^2}{2},\\
	\mathrm{SLs}(3; \sigma) &= -2 \left(\zeta(3) - \Re \mathrm{Li}_3(e^{i\sigma}) + \frac{\sigma^2}{2} \log \left(2 \sin \frac{\sigma}{2}\right) \right).
\end{align*}

Our main result concerns the values at negative integers. In particular, they involve poly-Bernoulli numbers when $\sigma = \pi/3$. The polynomials $p_n(x)$ appearing in the theorem below have integer coefficients and are defined in \eqref{def:pq}. Here we list only the first few examples:
\begin{align*}
	p_{-1}(x) = 0, \quad p_0(x) = 1, \quad p_1(x) = 3, \quad p_2(x) = 8x+7, \quad p_3(x) = 20x^2+70x+15, \dots.
\end{align*}

\begin{theorem}\label{thm:main}
	For $0 < \sigma < \pi$, the shifted log-sine integral $\mathrm{SLs}(s; \sigma)$ admits the analytic continuation to $s \in \bbC$. For any integer $n \ge -1$, we have
	\[
		\mathrm{SLs}(-n; \sigma) = \frac{x p_n(x)}{2^n (1-x)^{n+1}},
	\]
	where we put $x = \sin^2(\sigma/2)$. In particular, for $n \ge 0$, we have
	\[
		\mathrm{SLs}(-n; \pi/3) = \frac{1}{3} \sum_{k=0}^n B_{n-k}^{(-k)}.
	\]
\end{theorem}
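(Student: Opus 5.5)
The plan is to move the Mellin variable onto $u=-2\log(\sin(\theta/2)/\sin(\sigma/2))$ and use the standard continuation of $\frac{1}{\Gamma(s)}\int_0^\infty u^{s-1}F(u)\,\dd u$.

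\textbf{Step 1 (Mellin form).} As $\theta$ runs from $0$ to $\sigma$, $u$ runs from $\infty$ to $0$. The inverse is $\theta(u)=2\arcsin(a e^{-u/2})$ with $a=\sin(\sigma/2)$. Since $\sigma<\pi$ we have $a<1$, so $\theta(u)$ is real analytic on $[0,\infty)$, including at $u=0$. It also satisfies $\theta(u)=O(e^{-u/2})$ as $u\to\infty$.

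Put $h(u)=\tfrac12(\theta(u)-\sigma)^2$. Then $(\theta-\sigma)\,\dd\theta = h'(u)\,\dd u$, and the orientation reverses, so
\[
\mathrm{SLs}(s;\sigma)=\frac{1}{\Gamma(s-1)}\int_0^\infty u^{(s-1)-1}F(u)\,\dd u,\qquad F(u)=-h'(u)=-(\theta-\sigma)\theta'(u).
\]
The function $F$ is smooth at $0$ and decays exponentially, because $\theta'(u)=O(e^{-u/2})$. The usual argument (integrate by parts repeatedly, or split $\int_0^1+\int_1^\infty$ and subtract a Taylor polynomial) then gives an entire continuation. It also gives
\[
\frac{1}{\Gamma(w)}\int_0^\infty u^{w-1}F(u)\,\dd u\Big|_{w=-m}=(-1)^mF^{(m)}(0).
\]
With $w=s-1$ and $m=n+1$ this yields
\[
\mathrm{SLs}(-n;\sigma)=(-1)^{n}h^{(n+2)}(0)\qquad (n\ge -1).
\]
As a sanity check, at $s=2$ we get $\int_0^\infty F\,\dd u=h(0)-h(\infty)=-\sigma^2/2$, matching the value $\mathrm{SLs}(2;\sigma)=-\sigma^2/2$ quoted in the introduction.

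\textbf{Step 2 (an autonomous ODE).} Differentiating $\sin(\theta/2)=ae^{-u/2}$ gives $\theta'=-\tan(\theta/2)$. For $x=\sin^2(\theta/2)$ this gives $x'=-x$, so on functions of $x$ the derivation $D=\dd/\dd u$ acts as $-x\,\dd/\dd x$. We also have $\tan^2(\theta/2)=x/(1-x)$ and $D\tan(\theta/2)=-\tan(\theta/2)/(2(1-x))$.

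Write $D^m[(\theta-\sigma)\theta']=(\theta-\sigma)A_m+B_m$. Using $D(\theta-\sigma)=-\tan(\theta/2)$, the coefficients satisfy
\[
A_0=-\tan(\theta/2),\qquad B_0=0,\qquad A_{m+1}=DA_m,\qquad B_{m+1}=DB_m-\tan(\theta/2)\,A_m .
\]
Here $A_m=\tan(\theta/2)\,\alpha_m(x)$ and $B_m=\beta_m(x)$, with $\alpha_m,\beta_m$ rational in $x$ with denominators that are powers of $(1-x)$.

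At $u=0$ we have $\theta=\sigma$, so the first term vanishes and $h^{(n+2)}(0)=\beta_{n+1}(x)$ with $x=\sin^2(\sigma/2)$. Clearing denominators should give exactly the recursion \eqref{def:pq} for the pair $p_n,q_n$. From it we read off $(-1)^n\beta_{n+1}=xp_n(x)/(2^n(1-x)^{n+1})$, checking the first cases $p_{-1}=0$, $p_0=1$, $p_1=3$ directly.

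\textbf{Step 3 (poly-Bernoulli specialization).} At $\sigma=\pi/3$ we have $x=1/4$, so $\mathrm{SLs}(-n;\pi/3)=4^{n}p_n(1/4)/(2^n3^{n+1})$. It remains to identify this with $\tfrac13\sum_k B_{n-k}^{(-k)}$.

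My plan is to pass to generating functions. By Step 1, $\sum_n \mathrm{SLs}(-n;\sigma)t^n/n!=h''(-t)$, which is explicit: it is a rational expression in $\theta(-t)=2\arcsin(ae^{t/2})$ and $\tan(\theta(-t)/2)$. On the other side, I would use the known formula $\sum_{n,k}B_n^{(-k)}\frac{X^n}{n!}\frac{Y^k}{k!}=\frac{e^{X+Y}}{e^X+e^Y-e^{X+Y}}$. I would also use the description of the anti-diagonal sums obtained in~\cite{BenyiMatsusaka2023} in connection with Stephan's central binomial series. These should be compared either at the level of generating functions or through a common recurrence for both sides at $x=1/4$.

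I expect this last matching to be the main obstacle. The anti-diagonal sum mixes the factorial weights of the two-variable exponential generating function, so no direct specialization is available. The most promising route is to establish a recurrence in $n$ for $\sum_k B_{n-k}^{(-k)}$, or to take the closed form from~\cite{BenyiMatsusaka2023}, and check that $4^np_n(1/4)/(2^n3^n)$ satisfies the same recurrence and initial values.
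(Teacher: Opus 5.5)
\textbf{Part one (Steps 1 and 2).} These are correct and give a genuinely different proof of the continuation and of the $p_n$-formula.

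The paper writes $\mathrm{SLs}(s;\sigma)=\zeta_\mathrm{CB}(s;z)-\frac{\sigma}{\pi}\eta_\mathrm{CB}(s;z)$ with $z=\sin(\sigma/2)$. It gets the continuation from the fact that both series are entire in $s$. It then imports Lehmer's formula for $\zeta_\mathrm{CB}(-n;z)$ and a companion formula for $\eta_\mathrm{CB}(-n;z)$, the latter obtained from a generating function and \cite[Theorem 2.4]{BenyiMatsusaka2023}. The $q_n$-terms cancel because $\arcsin z=\sigma/2$.

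Your Mellin argument needs none of this, and the step you hedge on (``should give exactly the recursion'') does go through. By induction,
$\alpha_m=(-1)^{m+1}q_{m-1}(x)/(2^m(1-x)^m)$ and $\beta_m=(-1)^{m+1}x\,p_{m-1}(x)/(2^{m-1}(1-x)^m)$.
The reason is that $\alpha_{m+1}=-\alpha_m/(2(1-x))-x\alpha_m'$ is exactly the $q$-recursion in \eqref{def:pq}, and $\beta_{m+1}=-x\beta_m'-\frac{x}{1-x}\alpha_m$ is exactly the $p$-recursion. Your route is self-contained and shows structurally why only $p_n$ survives. What the paper's route buys is the explicit link to $\zeta_\mathrm{CB}$, and that link is exactly what you are missing in Step 3.

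\textbf{Part two (Step 3).} This is not a proof. The identification with $\frac13\sum_k B_{n-k}^{(-k)}$ is left open. The generating-function or recurrence matching you sketch amounts to reproving Stephan's observation, which is already established in \cite{BenyiMatsusaka2023}.

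The missing observation is that $\frac13(\frac23)^n p_n(\frac14)$ is precisely the rational part of $\zeta_\mathrm{CB}(-n;\frac12)=\sum_{m\ge1}m^n/\binom{2m}{m}$. That rational part is what Stephan's observation equates with the anti-diagonal sum. Your own framework gives this at once:
\begin{itemize}
\item By \cref{lem:zcb-int}, $\zeta_\mathrm{CB}(s;z)$ is the same Mellin transform with $\theta-\sigma$ replaced by $\theta$.
\item Since $D\theta=D(\theta-\sigma)$, the same $A_m,B_m$ give $\zeta_\mathrm{CB}(-n;z)=\mathrm{SLs}(-n;\sigma)+(-1)^n\sigma A_{n+1}|_{u=0}$.
\item The last term equals $z\arcsin z\,q_n(z^2)/(2^n(1-z^2)^{n+3/2})$, which is Lehmer's formula.
\end{itemize}
At $z=\frac12$ this term is a rational multiple of $\pi\sqrt3$, while $\mathrm{SLs}(-n;\pi/3)$ is rational. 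So $\mathrm{SLs}(-n;\pi/3)$ is the rational part, and citing \cite{BenyiMatsusaka2023} closes the argument exactly as in the paper.
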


In \cref{sec-2}, we establish a relation with the central binomial series and derive an analytic continuation. We then revisit Lehmer's work in \cref{sec-3} to describe the values of $\mathrm{SLs}(s; \sigma)$ at negative integers.

\section*{Acknowledgements}

The author is grateful to Masanobu Kaneko for informing him about the paper of Y\^{u}j\^{o}b\^{o}~\cite{Yujobo1946}, which Kaneko learned about from Genki Shibukawa. The author also thanks Hirofumi Tsumura and Ryota Umezawa for several helpful comments on an earlier version of this manuscript. The main ideas of this article were inspired by a seminar talk by Karin Ikeda. The author was supported by JSPS KAKENHI (JP21K18141 and JP24K16901).

\section{Central binomial series}\label{sec-2}

A key ingredient in the analytic continuation of $\mathrm{SLs}(s; \sigma)$ is its relation to the central binomial series, which converges for all $s \in \bbC$. The \emph{central binomial series} is defined by
\begin{align}
	\zeta_\mathrm{CB}(s; z) \coloneqq \sum_{m=1}^\infty \frac{(2z)^{2m}}{\binom{2m}{m} m^s}
\end{align}
for $|z| < 1$ and $s \in \bbC$. 

\begin{lemma}\label{lem:zcb-int}
	For $0 < \sigma < \pi$ and $\Re(s) > 1$, we have
	\[
		\zeta_\mathrm{CB}(s; \sin(\sigma/2)) = \frac{1}{\Gamma(s-1)} \int_0^\sigma \theta \left(-2\log \frac{\sin(\theta/2)}{\sin(\sigma/2)}\right)^{s-2} \dd \theta.
	\]
\end{lemma}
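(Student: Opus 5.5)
The plan is to substitute $t = -2\log\bigl(\sin(\theta/2)/\sin(\sigma/2)\bigr)$ so that the integral becomes a Mellin transform, and then to expand the integrand using the classical Maclaurin series of $\arcsin^2$. Put $z = \sin(\sigma/2) \in (0,1)$. As $\theta$ runs over $(0,\sigma]$, the quantity $\sin(\theta/2)$ increases from $0$ to $z$, so $t$ decreases from $+\infty$ to $0$. Inverting gives $\sin(\theta/2) = z e^{-t/2}$, that is,
\[
	\theta = \theta(t) = 2\arcsin\bigl(z e^{-t/2}\bigr),
\]
which is smooth and decreasing in $t \in [0,\infty)$. Writing $\theta\,\dd\theta = \tfrac12\,\dd(\theta^2)$, the right-hand side of the lemma becomes
\[
	\frac{1}{\Gamma(s-1)} \int_0^\infty t^{s-2} \left(-\frac12 \frac{\dd}{\dd t}\theta(t)^2\right) \dd t .
\]

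Next I will use the series $\arcsin^2(y) = \frac12\sum_{m\ge1} \frac{(2y)^{2m}}{m^2\binom{2m}{m}}$, valid for $|y|\le 1$. This gives
\[
	\theta(t)^2 = 2\sum_{m=1}^\infty \frac{(2z)^{2m}}{m^2\binom{2m}{m}} e^{-mt}.
\]
Since $0<z<1$ and $t\ge 0$, this series and its termwise $t$-derivative converge uniformly on $[0,\infty)$, by comparison with $\sum (2z)^{2m}/\binom{2m}{m}$, whose terms are $\asymp \sqrt{m}\,z^{2m}$. Hence
\[
	-\frac12\frac{\dd}{\dd t}\theta(t)^2 = \sum_{m=1}^\infty \frac{(2z)^{2m}}{m\binom{2m}{m}} e^{-mt}.
\]

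Finally, I will insert this series into the integral and integrate term by term. Each term gives $\int_0^\infty t^{s-2}e^{-mt}\,\dd t = \Gamma(s-1)\,m^{1-s}$ for $\Re(s)>1$. Collecting terms yields $\sum_m (2z)^{2m}/(\binom{2m}{m} m^s) = \zeta_{\mathrm{CB}}(s;z)$, as claimed.

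The only point needing care is the interchange of sum and integral. I will justify it by Tonelli/Fubini. All terms are nonnegative after replacing $t^{s-2}$ by $|t^{s-2}| = t^{\Re(s)-2}$, and the resulting majorant $\sum_m \frac{(2z)^{2m}}{m\binom{2m}{m}}\,\Gamma(\Re s-1)\,m^{1-\Re s}$ is finite because $z<1$. Integrability at $t=0$ uses $\Re(s)>1$. I expect no real obstacle beyond this. The only external input is the $\arcsin^2$ expansion, which I would cite or verify quickly: it follows from the differential equation $(1-y^2)f''-yf'=2$ for $f=\arcsin^2 y$.
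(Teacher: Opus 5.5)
Your proof is correct and is essentially the paper's argument: the paper substitutes $\sin(\theta/2) = xz$ (your $x = e^{-t/2}$) and expands $\frac{y\arcsin y}{\sqrt{1-y^2}} = \frac{y}{2}\frac{\dd}{\dd y}\arcsin^2 y$, which is exactly the termwise derivative of your $\arcsin^2$ series, and then integrates term by term. The only differences are cosmetic: your direct Gamma integral $\int_0^\infty t^{s-2}e^{-mt}\,\dd t = \Gamma(s-1)m^{1-s}$ replaces the paper's integration-by-parts step, and your Tonelli/Fubini bound makes explicit the interchange of sum and integral that the paper leaves implicit.
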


\begin{proof}
	First, we recall that for $|z| < 1$ the function $\arcsin z$ admits the expansion
	\begin{align}\label{eq:arcsin-exp}
		\frac{z \arcsin z}{\sqrt{1-z^2}} = \sum_{m=1}^\infty \frac{(2z)^{2m}}{\binom{2m}{m} 2m}.
	\end{align}
	For simplicity, we put $z = \sin(\sigma/2)$ in this proof. By changing variables via $\sin(\theta/2) = xz$, the right-hand side of the claim becomes
	\begin{align*}
		\frac{1}{\Gamma(s-1)} \int_0^1 (-2\log x)^{s-2} \frac{4z \arcsin (xz)}{\sqrt{1-x^2 z^2}} \dd x &= \frac{4}{\Gamma(s-1)} \int_0^1 \frac{(-2\log x)^{s-2}}{x} \sum_{m=1}^\infty \frac{(2xz)^{2m}}{\binom{2m}{m} 2m} \dd x\\
			&= \frac{4}{\Gamma(s)} \sum_{m=1}^\infty \frac{(2z)^{2m}}{\binom{2m}{m}} \int_0^1 (s-1) \frac{(-2\log x)^{s-2}}{x} \frac{x^{2m}}{2m} \dd x.
	\end{align*}
	Applying integration by parts, 
	this expression becomes
	\begin{align*}
		&= \sum_{m=1}^\infty \frac{(2z)^{2m}}{\binom{2m}{m}} \frac{2}{\Gamma(s)} \int_0^1 (-2\log x)^{s-1} x^{2m-1} \dd x = \sum_{m=1}^\infty \frac{(2z)^{2m}}{\binom{2m}{m}} \frac{1}{m^s},
	\end{align*}
	which coincides with the left-hand side of the claim.
\end{proof}

Next, as a half-integer shift analogue of the central binomial series, we introduce the following function for $|z| < 1$ and $s \in \bbC$:
\begin{align}
	\eta_\mathrm{CB}(s; z) \coloneqq \sum_{m=0}^\infty \frac{(2z)^{2m+1}}{\binom{2m+1}{m+1/2} (m+1/2)^s},
\end{align}
where $\binom{2m+1}{m+1/2} = \Gamma(2m+2)/\Gamma(m+3/2)^2$. This coincides with the case $a=1/2$ of the Hurwitz--Lerch type central binomial series $\Phi_\mathrm{CB}(s,a,z)$ introduced by Ikeda--Kadono~\cite{IkedaKadono2026}. By an argument similar to the one above, we obtain the following log-sine integral representation.

\begin{lemma}
	For $0 < \sigma < \pi$ and $\Re(s) > 1$, we have
	\[
		\eta_\mathrm{CB}(s; \sin(\sigma/2)) = \frac{\pi}{\Gamma(s-1)} \int_0^{\sigma} \left(-2\log \frac{\sin(\theta/2)}{\sin(\sigma/2)} \right)^{s-2} \dd \theta.
	\]
\end{lemma}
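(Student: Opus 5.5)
The proof follows the same route as \cref{lem:zcb-int}: substitute $\sin(\theta/2) = xz$ with $z = \sin(\sigma/2)$, expand the resulting algebraic function as a power series in $x$, and integrate term by term. This time we use the binomial expansion
\[
	\frac{1}{\sqrt{1-w^2}} = \sum_{m=0}^\infty \binom{2m}{m} \frac{w^{2m}}{4^m}, \qquad |w|<1,
\]
in place of \eqref{eq:arcsin-exp}.

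\textbf{Step 1 (substitution).} Since $\frac12\cos(\theta/2)\,\dd\theta = z\,\dd x$, we have $\dd\theta = 2z\,\dd x/\sqrt{1-x^2z^2}$. The range $\theta\in(0,\sigma)$ corresponds to $x\in(0,1)$. The right-hand side of the claim therefore becomes
\[
	\frac{\pi}{\Gamma(s-1)} \int_0^1 (-2\log x)^{s-2} \frac{2z}{\sqrt{1-x^2z^2}}\, \dd x
	= \frac{2\pi z}{\Gamma(s-1)} \sum_{m=0}^\infty \binom{2m}{m}\frac{z^{2m}}{4^m} \int_0^1 (-2\log x)^{s-2} x^{2m}\,\dd x .
\]

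\textbf{Step 2 (justifying the interchange).} For real $s>1$ every term is nonnegative, so monotone convergence justifies swapping sum and integral. For complex $s$ with $\Re(s)>1$ we bound $|(-2\log x)^{s-2}|$ by $(-2\log x)^{\Re(s)-2}$, which gives absolute convergence and lets us apply dominated convergence. Here $0<z<1$ is essential, since it makes $\sum_m \binom{2m}{m} z^{2m}/4^m$ converge.

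\textbf{Step 3 (the inner integral).} Substituting $x = e^{-t/2}$ gives
\[
	\int_0^1 (-2\log x)^{s-2} x^{2m}\,\dd x = \frac{\Gamma(s-1)}{2\,(m+1/2)^{s-1}} .
\]
Hence the right-hand side equals
\[
	\pi \sum_{m\ge0} \binom{2m}{m} \frac{z^{2m+1}}{4^m}\,(m+1/2)^{1-s}.
\]
Unlike in \cref{lem:zcb-int}, no integration by parts is needed.

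\textbf{Step 4 (matching coefficients).} This is the only real computation and the step I expect to be the main obstacle. It amounts to the Gamma-function identity
\[
	\frac{1}{\binom{2m+1}{m+1/2}} = \frac{\Gamma(m+3/2)^2}{(2m+1)!} = \frac{\pi(2m+1)\binom{2m}{m}}{4\cdot 16^m}.
\]
To prove it, apply the duplication formula $\Gamma(m+3/2) = \sqrt{\pi}\,(2m+2)!/(4^{m+1}(m+1)!)$. This reduces the claim to
\[
	\frac{(2m+2)!^2}{(m+1)!^2\,(2m+1)!} = (2m+2)\binom{2m+2}{m+1} = 4(2m+1)\binom{2m}{m}.
\]
Multiplying the resulting coefficient by $(2z)^{2m+1}/(m+1/2)^s$ gives
\[
	\pi\binom{2m}{m}\frac{z^{2m+1}}{4^m}(m+1/2)^{1-s},
\]
using $2(2m+1)/(m+1/2)^s = 4\,(m+1/2)^{1-s}$. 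This agrees termwise with the series from Step 3, which proves the lemma.
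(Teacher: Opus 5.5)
Your proof is correct and takes essentially the paper's route: the paper repeats the argument of \cref{lem:zcb-int} with the expansion \eqref{eq:1sq-exp}, and your Step 4 proves \eqref{eq:1sq-exp} from the ordinary binomial series, an identity the paper uses without proof. Evaluating the inner integral directly via $x = e^{-t/2}$, rather than integrating by parts as in \cref{lem:zcb-int}, is a harmless simplification.
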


\begin{proof}
	The proof is obtained by repeating the argument of \cref{lem:zcb-int}, replacing equation \eqref{eq:arcsin-exp} with the identity
	\begin{align}\label{eq:1sq-exp}
		\frac{\pi z}{2\sqrt{1-z^2}} = \sum_{m=0}^\infty \frac{(2z)^{2m+1}}{\binom{2m+1}{m+1/2} (2m+1)}.
	\end{align}
	Hence, we omit the details.
\end{proof}


Therefore, we have the following.

\begin{theorem}\label{thm:SLs-AC}
	For $0 < \sigma < \pi$ and $\Re(s) > 1$, we have
	\[
		\mathrm{SLs}(s; \sigma) = \zeta_\mathrm{CB}(s; \sin(\sigma/2)) - \frac{\sigma}{\pi} \eta_\mathrm{CB}(s; \sin(\sigma/2)).
	\]
	In particular, the expression on the right-hand side provides an analytic continuation to $s \in \bbC$.
\end{theorem}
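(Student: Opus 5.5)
The identity follows directly from the two preceding lemmas. Split the integrand in the definition of $\mathrm{SLs}(s;\sigma)$ according to $\theta - \sigma = \theta - \sigma\cdot 1$. For $\Re(s) > 1$ this gives
\[
	\mathrm{SLs}(s;\sigma) = \frac{1}{\Gamma(s-1)}\int_0^\sigma \theta\, L(\theta)^{s-2}\,\dd\theta - \frac{\sigma}{\Gamma(s-1)}\int_0^\sigma L(\theta)^{s-2}\,\dd\theta,
\qquad L(\theta) = -2\log\frac{\sin(\theta/2)}{\sin(\sigma/2)}.
\]
The splitting is legitimate because each piece converges separately. On $(0,\sigma)$ we have $L(\theta) > 0$, and $L$ vanishes only at $\theta = \sigma$, to first order, since $\sin$ is increasing on $(0,\pi/2)$. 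Near $\theta = \sigma$ the integrand therefore behaves like $(\sigma - \theta)^{\Re(s)-2}$, which is integrable when $\Re(s) > 1$. Near $\theta = 0$ it behaves like $|\log\theta|^{\Re(s)-2}$, which is integrable for every $s$. By \cref{lem:zcb-int} the first term equals $\zeta_\mathrm{CB}(s;\sin(\sigma/2))$. By the subsequent lemma the second integral equals $\pi^{-1}\eta_\mathrm{CB}(s;\sin(\sigma/2))$. This proves the stated identity.

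For the analytic continuation, it remains to show that the right-hand side is entire in $s$. Put $z = \sin(\sigma/2)$, so $0 < z < 1$ for $0 < \sigma < \pi$. Each term $(2z)^{2m}\binom{2m}{m}^{-1} m^{-s}$ is entire in $s$. On a compact set $|s| \le R$ it is bounded by $(2z)^{2m}\binom{2m}{m}^{-1} m^{R}$. Since $\binom{2m}{m} \sim 4^m/\sqrt{\pi m}$, this bound is $O\bigl(z^{2m} m^{R+1/2}\bigr)$, which is summable because $z^2 < 1$. The series thus converges locally uniformly in $s$, and by the Weierstrass theorem $\zeta_\mathrm{CB}(s;z)$ is entire. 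The same argument applies to $\eta_\mathrm{CB}(s;z)$, using $\binom{2m+1}{m+1/2} = \Gamma(2m+2)/\Gamma(m+3/2)^2 \asymp 4^m/\sqrt{m}$, which follows from Stirling's formula or the duplication formula. Hence the right-hand side is entire, and it agrees with $\mathrm{SLs}(s;\sigma)$ on $\Re(s) > 1$, so it furnishes the continuation.

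There is no genuine obstacle here. The only points needing care are the separate convergence of the two integrals, which justifies the splitting, and the locally uniform growth bound on the central binomial coefficients, which gives entirety.
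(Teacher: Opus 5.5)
Your proof is correct and follows the paper's approach: the paper also just splits $\theta-\sigma$ and applies the two lemmas. Your convergence checks for the two split integrals, and the locally uniform bound via $\binom{2m}{m}\sim 4^m/\sqrt{\pi m}$ (and its half-integer analogue), spell out what the paper assumes when it says the central binomial series converges for all $s$.
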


\section{Lehmer's polynomials}\label{sec-3}

We define the sequences of polynomials $(p_n(x))_{n=-1}^\infty$ and $(q_n(x))_{n=-1}^\infty$ by the initial conditions $p_{-1}(x) = 0$ and $q_{-1}(x) = 1$, together with the recurrence relations
\begin{align}\label{def:pq}
\begin{split}
	p_{n+1}(x) &= 2(nx+1) p_n(x) + 2x(1-x) p'_n(x) + q_n(x),\\
	q_{n+1}(x) &= (2(n+1)x+1)q_n(x) + 2x(1-x) q'_n(x).
\end{split}
\end{align}
Then Lehmer~\cite{Lehmer1985} proved the following result.

\begin{proposition}
	For an integer $n \ge -1$, we have
	\[
		\zeta_\mathrm{CB}(-n; z) = \frac{z}{2^n (1-z^2)^{n+3/2}} \bigg(z \sqrt{1-z^2} \cdot p_n(z^2) + \arcsin z \cdot q_n(z^2) \bigg).
	\]
\end{proposition}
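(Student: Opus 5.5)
The plan is to argue by induction on $n \ge -1$, using the fact that lowering $s$ by one in $\zeta_\mathrm{CB}(s;z)$ amounts to applying a first-order differential operator in $z$. Since $z\frac{\dd}{\dd z}(2z)^{2m} = 2m(2z)^{2m}$, and the series converges absolutely and locally uniformly for $|z|<1$ (its terms behave like $z^{2m}m^{1/2-\Re s}$), termwise differentiation is legitimate and gives
\[
	\zeta_\mathrm{CB}(s-1;z) = \frac{z}{2}\frac{\dd}{\dd z}\zeta_\mathrm{CB}(s;z).
\]

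\emph{Base case $n=-1$.} With $p_{-1}=0$ and $q_{-1}=1$, the claimed right-hand side is $2z\arcsin z/\sqrt{1-z^2}$. On the other hand, $\zeta_\mathrm{CB}(1;z)=\sum_m (2z)^{2m}/(\binom{2m}{m}m)$, which is twice the series in \eqref{eq:arcsin-exp}. So the base case is exactly that identity.

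\emph{Inductive step.} I put $x=z^2$ and $A=\arcsin z$. Then $\frac{z}{2}\frac{\dd}{\dd z}=x\frac{\dd}{\dd x}$, and $x\frac{\dd A}{\dd x}=\frac{z}{2\sqrt{1-x}}$. I rewrite the claimed value as
\[
	\zeta_\mathrm{CB}(-n;z)=2^{-n}\Big[x(1-x)^{-n-1}p_n(x)+zA(1-x)^{-n-3/2}q_n(x)\Big],
\]
and apply $x\frac{\dd}{\dd x}$ to each of the two summands separately.

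The first summand gives
\[
	x(1-x)^{-n-2}\big[(1-x)p_n+(n+1)xp_n+x(1-x)p_n'\big].
\]

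For the second summand, I use $x\frac{\dd z}{\dd x}=z/2$ and $x\frac{\dd}{\dd x}(1-x)^{-n-3/2}=(n+\tfrac32)\frac{x}{1-x}(1-x)^{-n-3/2}$. This produces the term
\[
	zA(1-x)^{-n-5/2}\cdot\tfrac12\big[(2(n+1)x+1)q_n+2x(1-x)q_n'\big],
\]
together with a term coming from the derivative of $A$, namely $\tfrac12 x(1-x)^{-n-2}q_n$.

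Collecting everything, the $A$-free part equals
\[
	2^{-(n+1)}\,x(1-x)^{-n-2}\big[2(nx+1)p_n+2x(1-x)p_n'+q_n\big].
\]
The $A$-part equals $2^{-(n+1)}zA(1-x)^{-n-5/2}\big[(2(n+1)x+1)q_n+2x(1-x)q_n'\big]$. By \eqref{def:pq}, these brackets are exactly $p_{n+1}(x)$ and $q_{n+1}(x)$. Re-factoring as $\frac{z}{2^{n+1}(1-z^2)^{n+5/2}}\big(z\sqrt{1-z^2}\,p_{n+1}+Aq_{n+1}\big)$ then gives the formula for $n+1$.

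The only real obstacle is the bookkeeping of the half-integer powers of $1-x$ and of the factor $\tfrac12$. In particular, the $q_n$ contribution from $\frac{\dd A}{\dd x}$ must land in the $p_{n+1}$ bracket with the correct coefficient. The computation above indicates that it does, so the recurrences \eqref{def:pq} are precisely tailored to make the induction close.
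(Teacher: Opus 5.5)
Your proof is correct. The base case is twice \eqref{eq:arcsin-exp}. Termwise differentiation is legitimate because $\zeta_\mathrm{CB}(s;z)$ is a power series in $z$ with radius of convergence $1$. The bookkeeping also checks out: the $A$-free terms combine to
$2^{-(n+1)}x(1-x)^{-n-2}\bigl[2(nx+1)p_n+2x(1-x)p_n'+q_n\bigr]$, and the $A$-terms combine to $2^{-(n+1)}zA(1-x)^{-n-5/2}\bigl[(2(n+1)x+1)q_n+2x(1-x)q_n'\bigr]$. By \eqref{def:pq} these brackets are $p_{n+1}$ and $q_{n+1}$. For complex $z$, read $x\frac{\dd}{\dd x}$ as shorthand for $\frac z2\frac{\dd}{\dd z}$, which sends $f(z^2)$ to $z^2 f'(z^2)$; nothing else changes.

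The paper does not prove this proposition at all; it simply quotes Lehmer~\cite{Lehmer1985}. Your induction therefore supplies what the paper outsources, and it shows directly why the recurrences \eqref{def:pq} have the shape they do.

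The closest argument in the paper is its proof of the companion formula for $\eta_\mathrm{CB}(-n;z)$. There, all $n$ are packaged at once into the exponential generating function $\sum_{n\ge 0}\eta_\mathrm{CB}(1-n;z)\,t^n/n!$, which is the $s=1$ closed form evaluated at $ze^{t/2}$. The Taylor coefficients in $t$ are then read off by citing \cite[Theorem 2.4]{BenyiMatsusaka2023}.

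The two approaches are equivalent, since $\frac{\dd}{\dd t}F(ze^{t/2})$ equals $\frac w2\frac{\dd F}{\dd w}$ evaluated at $w=ze^{t/2}$. Your version is elementary and self-contained. The generating-function version is more compact once the coefficient-extraction result is available, and it treats $\zeta_\mathrm{CB}$ and $\eta_\mathrm{CB}$ uniformly.
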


Ikeda--Kadono obtained a similar result for $\eta_\mathrm{CB}(-n; z)$. However, in this special case, a more concise statement than their \cite[Theorem 3.3]{IkedaKadono2026} can be derived as follows.

\begin{proposition}
	For an integer $n \ge -1$, we have
	\[
		\eta_\mathrm{CB}(-n; z) = \frac{\pi}{2} \frac{z q_n(z^2)}{2^n (1-z^2)^{n+3/2}}.
	\]
\end{proposition}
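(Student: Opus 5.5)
Our plan is to argue by induction on $n$, using the operator $\vartheta = z\,\frac{\dd}{\dd z}$. For every $s \in \bbC$ the defining series gives
\[
	\frac{1}{2}\vartheta\, \eta_\mathrm{CB}(s; z) = \sum_{m=0}^\infty \frac{(2z)^{2m+1}}{\binom{2m+1}{m+1/2}} \frac{2m+1}{2(m+1/2)^{s}} = \eta_\mathrm{CB}(s-1; z).
\]
Termwise differentiation is legitimate because the series converges locally uniformly for $|z|<1$. Hence it suffices to establish two things: the base case $n=-1$, and the fact that the proposed closed form $F_n(z) := \frac{\pi}{2}\frac{z q_n(z^2)}{2^n(1-z^2)^{n+3/2}}$ satisfies $F_{n+1} = \frac12 \vartheta F_n$.

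For the base case $n=-1$, we have $\eta_\mathrm{CB}(1;z) = \sum_{m\ge 0} (2z)^{2m+1}/\bigl(\binom{2m+1}{m+1/2}(m+1/2)\bigr)$, which is twice the right-hand side of \eqref{eq:1sq-exp}. It therefore equals $\pi z/\sqrt{1-z^2}$. With $q_{-1}=1$, the claimed formula reads $\frac{\pi}{2}\cdot\frac{2z}{(1-z^2)^{1/2}}$, and the two agree. Identity \eqref{eq:1sq-exp} itself is taken as known from the previous lemma. If one wanted it independently, it follows by expanding $(1-z^2)^{-1/2}$ and noting that $\binom{2m+1}{m+1/2}(2m+1)/2^{2m+1} = \frac{\pi}{2}\cdot\frac{(2m)!}{4^m m!^2}$ after the duplication formula.

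The inductive step is a direct computation, which we expect to be the only real work and to be routine. Set $u = z^2$, so that $\vartheta = 2u\,\frac{\dd}{\dd u}$ and $F_n = \frac{\pi}{2^{n+1}}\, u^{1/2} q_n(u)(1-u)^{-n-3/2}$. Then
\[
	\tfrac12\vartheta F_n = \frac{\pi}{2^{n+1}}\, u\frac{\dd}{\dd u}\Bigl(u^{1/2}q_n(u)(1-u)^{-n-3/2}\Bigr)
	= \frac{\pi}{2^{n+1}} \frac{u^{1/2}}{(1-u)^{n+5/2}}\Bigl(\tfrac12(1-u)q_n + u(1-u)q_n' + (n+\tfrac32)u q_n\Bigr).
\]
The bracket equals $\frac12\bigl((2(n+1)u+1)q_n + 2u(1-u)q_n'\bigr) = \frac12 q_{n+1}(u)$ by the recurrence \eqref{def:pq}. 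This gives exactly
\[
	\frac{\pi}{2}\cdot\frac{z\,q_{n+1}(z^2)}{2^{n+1}(1-z^2)^{n+5/2}} = F_{n+1}.
\]
Combining the base case with this step, induction completes the proof. The only care needed is in the exponent bookkeeping: $(n+3/2)u + \frac12(1-u) = (n+1)u + \frac12$.
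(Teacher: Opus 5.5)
Your proof is correct, and it takes a different route from the paper's.

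\textbf{The paper's route.} It forms the exponential generating function $\sum_{n\ge 0}\eta_\mathrm{CB}(1-n;z)\,t^n/n! = \pi z e^{t/2}/\sqrt{1-z^2e^t}$, using \eqref{eq:1sq-exp}. It then cites \cite[Theorem 2.4]{BenyiMatsusaka2023} to identify the Taylor coefficients in $t$ as $\frac{\pi}{2}\, z q_{n-1}(z^2)/\bigl(2^{n-1}(1-z^2)^{n+1/2}\bigr)$.

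\textbf{How your argument relates.} Yours is the infinitesimal form of the same construction. Since $e^{t\vartheta/2}f(z) = f(ze^{t/2})$, the paper's generating function is exactly $\eta_\mathrm{CB}(1; ze^{t/2})$, i.e.\ the exponential of your lowering operator $\frac12\vartheta$ applied to your base case. The paper outsources the coefficient identification to an external theorem; you verify it directly. Your computation also shows that the recurrence \eqref{def:pq} for $q_n$ is precisely the identity $\frac12\vartheta F_n = F_{n+1}$.

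\textbf{What each buys.} Your proof is self-contained apart from \eqref{eq:1sq-exp}, which the paper also takes as known, and it explains where the $q_n$-recurrence comes from. The paper's version is shorter given the citation. It also packages all the values into one closed-form generating function, which matches the framework of the B\'enyi--Matsusaka paper used elsewhere in the article.

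\textbf{A slip in your optional aside.} The ratio is inverted. The correct identity is $\binom{2m+1}{m+1/2}(2m+1)/2^{2m+1} = \frac{2}{\pi}\cdot\frac{4^m\, m!^2}{(2m)!}$; at $m=0$ one has $\binom{1}{1/2} = 4/\pi$, giving $2/\pi$, not $\pi/2$. Equivalently, $2^{2m+1}/\bigl(\binom{2m+1}{m+1/2}(2m+1)\bigr) = \frac{\pi}{2}\binom{2m}{m}4^{-m}$, which is what matching with the expansion of $(1-z^2)^{-1/2}$ requires. This does not affect your main argument.
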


\begin{proof}
	The generating function of $\eta_\mathrm{CB}(-n; z)$ is calculated as
	\begin{align*}
		\sum_{n=0}^\infty \eta_\mathrm{CB}(1-n; z) \frac{t^n}{n!} &= \sum_{n=0}^\infty \sum_{m=0}^\infty \frac{(m+1/2)^{n-1} (2z)^{2m+1}}{\binom{2m+1}{m+1/2}} \frac{t^n}{n!}\\
			&= \sum_{m=0}^\infty \frac{(2z)^{2m+1}}{\binom{2m+1}{m+1/2}} \frac{e^{(m+1/2)t}}{m+1/2}\\
			&= \frac{\pi ze^{t/2}}{\sqrt{1-z^2e^t}}.
	\end{align*}
	The last equality follows from \eqref{eq:1sq-exp}. By \cite[Theorem 2.4]{BenyiMatsusaka2023}, this equals
	\[
		= \frac{\pi}{2} \sum_{n=0}^\infty \frac{z q_{n-1}(z^2)}{2^{n-1} (1-z^2)^{n+1/2}} \frac{t^n}{n!},
	\]
	which concludes the proof.
\end{proof}

Combining the above propositions with \cref{thm:SLs-AC}, we can eliminate the contribution of the polynomial $q_n(x)$, and obtain
\begin{align*}
	\mathrm{SLs}(-n; \sigma) &= \zeta_\mathrm{CB}(-n; \sin(\sigma/2)) - \frac{\sigma}{\pi} \eta_\mathrm{CB}(-n; \sin(\sigma/2))\\
		&= \frac{z^2}{2^n (1-z^2)^{n+1}} p_n(z^2),
\end{align*}
where $z = \sin(\sigma/2)$. This proves the first part of \cref{thm:main}. When $\sigma = \pi/3$, we have
\[
	\mathrm{SLs}(-n; \pi/3) = \frac{1}{3} \left(\frac{2}{3}\right)^n p_n \left(\frac{1}{4}\right).
\]
Since this value was observed by Stephan to be equal to $1/3 \sum_{k=0}^n B_{n-k}^{(-k)}$ for $n \ge 0$, and later proved by B\'{e}nyi and the author in~\cite{BenyiMatsusaka2023}, the proof of \cref{thm:main} is complete.

\bibliographystyle{amsalpha}
\bibliography{References} 

\end{document}